\pgfplotsset{compat=newest} 
\pgfplotsset{plot coordinates/math parser=false} 
\newlength\figureheight 
\newlength\figurewidth 
\newtheorem{theorem}{Theorem}
\newtheorem{definition}[theorem]{Definition}
\newtheorem{proposition}[theorem]{Proposition}
\newtheorem{property}[theorem]{Property}
\newtheorem{remark}[theorem]{Remark}
\newtheorem{assumption}[theorem]{Assumption}
\newcommand{\vsig}{\mbox{\boldmath$\sigma$}}
\newcommand{\valpha}{\mbox{\boldmath$\alpha$}}
\newcommand{\vx}{\mathbf{x}}
\newcommand{\R}{\ensuremath{\mathbb{R}}\xspace} 
\newcommand{\N}{\ensuremath{\mathbb{N}}\xspace} 
\newcommand{\X}{\ensuremath{\mathcal{X}}\xspace}
\newcommand{\Z}{\ensuremath{\mathcal{Z}}\xspace}
\DeclareMathOperator{\inti}{int}
\newcommandx{\falta}[2][1=]{\todo[linecolor=red,backgroundcolor=red!25,bordercolor=red,#1]{#2}}
\newcommandx{\completar}[2][1=]{\todo[linecolor=blue,backgroundcolor=blue!25,bordercolor=blue,#1]{#2}}
\newcommandx{\chequear}[2][1=]{\todo[linecolor=OliveGreen,backgroundcolor=OliveGreen!25,bordercolor=OliveGreen,#1]{#2}}
\newcommandx{\improve}[2][1=]{\todo[linecolor=Plum,backgroundcolor=Plum!25,bordercolor=Plum,#1]{#2}}
\newcommandx{\explain}[2][1=]{\todo[linecolor=lime,backgroundcolor=lime!25,bordercolor=lime,#1]{#2}}
\numberwithin{equation}{section}
\numberwithin{theorem}{section}
\begin{document}
\begin{frontmatter}

\title{Discrete-time MPC for switched systems with applications to biomedical problems}

\author[First]{A. Anderson}
\author[First]{A. H. Gonz\'alez}
\cortext[corr-author]{Corresponding authors}
\ead{alejgon@santafe-conicet.gov.ar}
\author[Second]{A. Ferramosca}
\author[Third]{E. Hernandez-Vargas}
\ead{vargas@fias.uni-frankfurt.de}

\address[First]{Institute of Technological Development for the Chemical Industry (INTEC),\\ CONICET-Universidad Nacional del Litoral (UNL). G\"uemes 3450, (3000) Santa Fe, Argentina.}

\address[Second]{CONICET - Universidad Tecnol\'ogica Nacional (UTN). Facultad Regional de Reconquista. Calle 27 de Abril, 1000 (3560), Reconquista, Santa Fe, Argentina.}

\address[Third]{Institute of Mathematics, UNAM, Mexico. Research Fellow, Frankfurt Institute for Advanced Studies, Germany.}


\begin{abstract}
Switched systems in which the manipulated control action is the time-depending switching
signal describe many engineering problems, mainly related to biomedical applications. In such a context, to control the system 
means to select an autonomous system - at each time step - among a given finite family. Even when this selection can be done by solving a Dynamic 
Programming (DP) problem, such a solution is often difficult to apply, and state/control constraints cannot be explicitly considered.
In this work a new set-based Model Predictive Control (MPC) strategy is proposed to handle switched systems in a tractable form. 
The optimization problem at the core of the MPC formulation consists in an easy-to-solve mixed-integer optimization problem, whose 
solution is applied in a receding horizon way. Two biomedical applications are simulated to test the controller: 
(i) the drug schedule to attenuate the effect of viral mutation and drugs resistance on the viral load, and 
(ii) the drug schedule for Triple Negative breast cancer treatment.
The numerical results suggest that the proposed strategy outperform the schedule for available treatments.
\end{abstract}
\begin{keyword}
	Model Predictive Control, Switched Systems, Stability, Biomedical Treatment, Resistance.
\end{keyword}
\end{frontmatter}

\section{Introduction}
{Many engineering problems related to the control of mechanical, automotive, power, aircrafts, traffic, and biomedical 
	dynamical systems can be properly described by the so-called switched systems.
	Switched systems are dynamical systems consisting in a collection of subsystem and a rule (switching signal) 
	that governs the switching among them.
	The dynamical subsystems of the collection can be autonomous (uncontrolled) or controlled, while the switching signal 
	can be state-dependent or time-dependent \cite{Liberzon03}. In this work the interest is put on the autonomous, time-dependent case.
	In this context, the switching signal may be regarded as the manipulated control action (the decision variable), in such a way that the control objective
	is achieved (exclusively) by a proper combination of subsystems trough the time.}
A {particularly} interesting application of {such kind of} switched systems is the problem of scheduling 
therapies {in} different biomedical problems \cite{ChapmanCDC18,Vargasbook19}.
In \cite{Vargasbook19}{, for instance,} the {control problem of minimizing the viral load while 
	delaying the emergence of highly resistant mutant viruses \cite{Clavel04} is studied, by means of a switching strategy. Each autonomous submodel
	represents the virus dynamic under a given specific treatment (specified by a given drug or combination of drugs), and the switching
	signal decides which treatment is used at each time.} The clinical goal is to delay the time until patients exhibit resistant strains to all existing 
{drug} regimens \cite{Martinez08}. {One of the challenging problem is the} crucial trade-off between switching therapies: on the one hand, switching early carries the risk of poor adherence to a new drug regimen and prematurely exhausting the limited number of remaining therapies; on the other hand, switching drugs too late allows the accumulation of mutations that leads to multi-drug resistance \cite{Molla96}. 
In \cite{ChapmanCDC18} a switched linear system of the response of the Triple Negative breast cancer cell line to different treatment 
{is} studied. A cyclic drug schedule {is} proposed based on the {hypotheses that 
	some drugs} can shrink the live cancer cell population, but they are very toxic to healthy cells \cite{RisomNat18} (transient rather than long-term efficacy, low rate of response, negative secondary reactions \cite{Migliardi12,Jokinen15,Grilley16}){. On the other hand,
	other drugs only slow down} the growth of the live cancer cell population{, but} are less toxic to healthy cells. 

{Concerning the techniques able to solve the previous problem, Dynamic Programming, (DP) \cite{bertsekas1995dynamic} 
	is a first strategy suitable to compute the optimal 
	sequence of subsystem, for a given switched system problem. However, the DP solution is often difficult to apply (impossible in most of the cases), 
	and state/control constraints cannot be explicitly considered.
	One step ahead is the Model Predictive Control (MPC) strategy, which by means of the Receding Horizon Control (RHC) policy overcomes all the 
	implementation problems of DP, as stated in \cite{rawlings2017model}.}
The main features of MPC - which make it one of the most employed advanced control technique - are the explicit consideration of a 
model for prediction, the optimal computation of the control {actions}, and its ability to handle, easily and effectively, 
hard constraints on control and states \cite{MayneAUT00,rawlings2017model}. MPC theoretical background has been widely investigated in the last 
decades, showing how this technique is capable to provide stability, robustness, constraint satisfaction and tractable computation for linear and 
nonlinear systems \cite{RawlingsLIB09,AndersonOCAM18}. Set invariance theory, which is closely related to Lyapunov stability theory \cite{Blanchinibook15}, 
has also shown to be a powerful tool for analyzing dynamical systems subject to constraints. {Set-based MPC \cite{AndersonSCL18}} 
seems to be an appropriate strategy to undertake {such a control problems}.

The application of the MPC to switched systems is a growing field; the switching law is in fact either considered as a perturbation \cite{SunCCE11} 
or as part of the control inputs. In this last case, conditions for stabilizability have been provided by using a min-switching policy \cite{Liberzon03}, 
and Lyapunov–Metzler inequalities \cite{GeromelIJC06}. In this context, the stability of switched systems is neither intuitive nor trivial: 
switching between individually stable sub-systems may cause instability and conversely, switching between unstable sub-systems may yield a stable 
switched system \cite{Liberzon03}. The easy implementation and the anticipatory nature of the MPC seems an appropriate strategy for computing switching 
laws, since it may anticipate the activation of possible switching. Moreover, set-theory has been recently used in the context of stability of switching systems \cite{FiacchiniAUT14}, suggesting that set-based MPC approach {is} a promising tool for the analysis of stability, robustness and constraint satisfaction of this kind of systems.

This paper {studies} the behavior of a discrete-time switched system and a the invariance and controllable sets of the state space for this type of systems. A novel set-based MPC formulation for discrete-time switched linear systems is presented with asymptotic stability guaranteed. The performance of the proposal is assessed by several simulations in the biomedical fields, a viral mutation problem and a drug scheduling synthesis for cancer treatment is addressed showing that the proposed MPC outperformed the available treatments.

The paper is organized as follows, Section~\ref{Sec:D-TSwSys} introduces the switched linear system and an optimal control solution, Section~\ref{Sec_Invariance} presents an invariance set-theory for switched systems, in Section~\ref{sec:mainresult} the set-based MPC is formulated and the asymptotic stability is proved, Section~\ref{Sec_Application} address with several simulations the application to biomedical problems and Section~\ref{Sec:conclusion} states the conclusion of the work.

\subsection{Notation}
We denote, by $\mathbb N$ the set of natural numbers, by $\mathbb N_q=\{n\in\mathbb N:1\leq n\leq q \}$, by $\mathbb Z$ the set of integer numbers, by $\mathbb Z_{\geq 0}$ the set of non-negative integers, by $\mathbb Z_q=\{n\in\mathbb Z:0\leq n\leq q \}$, and by $\mathbb Z_{l:q}=\{n\in\mathbb Z:l\leq n\leq q \}$.
The Euclidean distance between two points $x,~y$ on $\mathbb{R}^n$ is represented by $d(x,y)$.
The distance from a point $x\in\mathbb{R}^n$ to a set $\Omega$ is defined as $d_{\Omega}(x) := \inf \{d(x,y):~y\in\Omega\}$. 
We define the cardinal number of a set $\Omega$ as $\mid \Omega\mid$.
The interior of a set $\Omega\subseteq\R^n$ is denoted by $\inti \Omega$.
A set $\Omega\subseteq \R^n$ is called star-convex with respect to the origin if for all $x\in\Omega$ the line segment from $0$ to $x$ is in $\Omega$.
A set $\Omega\subseteq \R^n$ is a $C^*$-set if it is compact, star-convex with respect to the origin and $0\in\inti \Omega$.


\section{Discrete-time linear switched system and optimal control solution}\label{Sec:D-TSwSys}

Consider the discrete-time switched system described by 
\begin{equation} \label{eq:SistOrig}
x(k+1) = A_{\sigma(k)}x(k),
\end{equation}
with $x(0)=x_0$, where $x(k) \in \X \subset \R^n$ is the system state at the $k$--th sample time, and the set $\X$ is closed. $\sigma(k)\in \Sigma:=\{1,2,\dots,q\}$ is the switching signal that, at any instant, selects indirectly a transition matrix $A_i\in\R^{n\times n}$ for $i\in\Sigma$.
In what follows the signal $\sigma(\cdot)$ is considered as manipulable variable. The problem of stabilizability consists in proving the existence of switching signals that yield asymptotic stability if applied to~\eqref{eq:SistOrig}.

In general, different initial conditions may correspond to different switching paths, because a switching path $\vsig=\{\sigma(0),\sigma(1),\dots,\sigma(T-1) \}$ depends heavily on the initial state $x_0$ \cite{Zhendong05}. This is a critical feature which makes the switched system essentially distinct from a linear time-varying system, where all initial states correspond to a single path (further details in~\cite[Example 1.5]{Zhendong05}).

Let $L_{\sigma}\in\N$ and $U_{\sigma}\in\N$ which depends on every signal $\sigma\in\Sigma$ with $L_{\sigma}\le U_{\sigma}$. The following assumption implies that every switching signal $\sigma$ must be applied to the system at least $L_{\sigma}$ instant of times and no more than $U_{\sigma}$ instant of times.

\begin{assumption}[Waiting times]\label{ass:UL}
Every signal $\sigma\in\Sigma$ must be applied to the switched system~\eqref{eq:SistOrig} at least $L_{\sigma}$ times and no more than $U_{\sigma}$ times.
\end{assumption}

Generally, to corner the above assumption in the bibliography it is considered the following sequence 
\begin{align}\label{eq_h_j}
\{(\sigma(0),h(0)),(\sigma(1),h(1)),\dots,(\sigma(T-1),h(T-1))\}
\end{align}
where $h(j)$ is an optimization variable that represents the number of times that signal $\sigma(j)$ is applied to the system, for $j\in\mathbb{Z}_{T-1}$. With this arrangement Assumption~\ref{ass:UL} hold with
\[L_{\sigma(j)}\leq h(j) \leq U_{\sigma(j)}\] for all $j\in\mathbb{Z}_{T-1}$.

In what follows, different notation will be used in such a way there is no need to add another variable, $h(j)$, to the optimization problem. 

Consider the following definition.
\begin{definition}[$j$-pack]\label{def_pack}
	Given a switching path $\vsig=\{\sigma(0),\sigma(1),\dots,\sigma(T-1) \}$. For every time $j\in\mathbb Z_{T-1}$ we define the $j$-pack set associated with $\vsig$ by
	\begin{align}\label{eq:quantum}
	\mathcal{P}[\vsig,j]:=\{\sigma(k+j): \text{ with }k\in\mathbb Z_{-j:T-1-j} \text{ such that } \sigma(j)=\sigma(j+i)~\forall~i=0,\pm1,\dots,k \}.
	\end{align}
	The term $i=0,\pm1,\dots,k$ in~\eqref{eq:quantum} depends on the sign of $k$, that is
	\begin{itemize}
		\item[] if $k\geq 0 \Rightarrow i= 0,1,\dots,k$ 
		\item[] if $k<0 \Rightarrow i= 0,-1,\dots,k$.
	\end{itemize} 
\end{definition}

The set $\mathcal{P}[\vsig,j]$ is composed by all signals in $\vsig$ consecutively identical to the signal $\sigma(j)$. According to Eq.~\eqref{eq:quantum},
\begin{itemize}
	\item $\sigma(j)\in\mathcal{P}[\vsig,j]$ for all $j\in \mathbb Z_{T-1}$. 
	\item If $\sigma(k+j)\in\mathcal{P}[\vsig,j]$ for some $k\in\mathbb Z_{0:T-1-j}$ $\Rightarrow$ $\sigma(k+j)=\sigma(j)$, moreover  \[\sigma(j)=\sigma(j+1)=\sigma(j+2)=\dots=\sigma(j+k), \] i.e., $\sigma(j+1)\in\mathcal{P}[\vsig,j],~\sigma(j+2)\in\mathcal{P}[\vsig,j],\dots,\sigma(j+k)\in\mathcal{P}[\vsig,j]$.
	\item If $\sigma(k+j)\in\mathcal{P}[\vsig,j]$ for some $k\in\mathbb Z_{-j:0}$, $\Rightarrow$ $\sigma(k+j)=\sigma(j)$, moreover  \[\sigma(j)=\sigma(j-1)=\sigma(j-2)=\dots=\sigma(j+k), \] i.e., $\sigma(j-1)\in\mathcal{P}[\vsig,j],~\sigma(j-2)\in\mathcal{P}[\vsig,j],\dots,\sigma(j+k)\in\mathcal{P}[\vsig,j]$.  
\item For instance, if $\vsig=\{1,2,2,2,3,3,2\}$, then 
$\mathcal{P}[\vsig,0]=\{1 \},$
$\mathcal{P}[\vsig,1]=\{2,2,2 \},$
$\mathcal{P}[\vsig,2]=\{2,2,2 \},$
$\mathcal{P}[\vsig,3]=\{2,2,2 \},$
$\mathcal{P}[\vsig,4]=\{3,3 \},$
$\mathcal{P}[\vsig,5]=\{3,3 \},$
$\mathcal{P}[\vsig,6]=\{2 \}.$\\
\end{itemize}
This way, variable $h(j)$ from Eq.\eqref{eq_h_j} is not necessary to address Assumption~\ref{ass:UL}. Instead the following property will be considered, 

\begin{property}\label{prop:dwelltime}
	A switching path $\vsig=\{\sigma(0),\sigma(1),\dots,\sigma(T-1) \}$ fulfills Assumption~\ref{ass:UL} if 
	\[L_{\sigma(j)}\leq\mid\mathcal{P}[\vsig,j]\mid \leq U_{\sigma(j)}\] for all $j\in\mathbb{Z}_{0:T-1}$.
\end{property}

\subsection{Optimal solution for discrete-time positive switched system}\label{Sec:OptimalSolution}

In this section we introduce the optimal control for the discrete-time positive switched system~\eqref{eq:SistOrig}. 
In order to stabilize the origin, we consider the cost function
\begin{align}\label{eq:CostOrigin}
\mathcal{J}_N(x,\vsig) = \sum_{k=0}^{N-1}c_{\sigma(k)} x(k)+c x(N)
\end{align}
where $x=x(0)$ is the current state; $x(j+1)=A_{\sigma(j)}x(j)$, for $j \in \mathbb Z_{N-1}$;  $\vsig$ is a switching discrete path, $c_{\sigma(j)}$ is a positive weight vector corresponding to signal $\sigma(j)$, and vector $c$ is a positive final weight.
Cost function~\eqref{eq:CostOrigin} must satisfy the Hamilton–Jacobi–Bellman equation \cite{Locatelli01}. 

If we define the optimal switching signal, the corresponding trajectory and the optimal cost functional as $\sigma^0(k)$, $x^0(k)$ and $\mathcal{J}_N(x,\vsig^0)$, respectively, where $\vsig^0:=\{\sigma^0(0),\dots,\sigma^0(N-1) \}$. 
Using the Hamilton–Jacobi–Bellman equation for the discrete case, we have:
\[\mathcal{V}(x(k),k)=\min_{\sigma(k)\in\Sigma}\{c_{\sigma(k)}+\mathcal{V}(x(k+1),k+1)\}.\]
The general solution for this system is given by 
\[ \mathcal{V}(x(k),k)=p(k)'x(k),\]
where $p(k)$ denote the costate vector. Therefore, the following nonlinear system is obtained:
\begin{align}\label{eq:optimal}
&x^0(k+1)=A_{\sigma^0(k)}x^0(k),~~x(0)=x_0 \\
&p^0(k)=A_{\sigma^0(k)}'p^0(k+1)+c_{\sigma^0(k)},~~p(T)=c \notag\\
&\sigma^0(k)=arg~ \min_s\{p^0(k+1)'A_sx^0(k)+c_sx^0(k). \} \notag
\end{align}

The state equation is subject to an initial condition and is solved forwards in time, whereas the costate equation must be integrated backward, both according to the coupling condition
given by the switching rule. As a result, the problem is a two-point boundary value problem, and cannot be solved using regular iteration techniques. 
%

However, as detailed in \cite{hernandez2011discrete}, obtaining the aforementioned solution could be difficult, if not impossible, because of the computational complexity.
In the next section, a Receding Horizon (RHC) strategy will be presented that - although sub-optimal - reasonably approximates the optimal solution, 
at a significant smaller computational cost. Furthermore, the proposed strategy allows to consider a complete cost function (penalizing the states 
all along a given horizon) and considers full state constraints.

\section{Invariance for switched systems}\label{Sec_Invariance}
The set-theory is an important tool for the analysis of dynamic systems. In particular, there is no consensus for the characterization of invariant sets for switched systems. 
In what follows, we present an analysis of invariance for switched systems that follows the general concept of classical invariance from \cite{Blanchinibook15}. Additionally, sufficient conditions for its existence is presented.
\begin{definition}[Switched invariant set]\label{def:SIS} 
	A set $\Omega \subset \X$ is a switched invariant set (SIS) of system \eqref{eq:SistOrig} if for all $x \in \Omega$, there exists $\sigma \in \Sigma$ such that $A_{\sigma}x \in \Omega$.
\end{definition}

In this section we suppose the following assumption.

\begin{assumption}
	The matrices $A_i$, with $i \in\Sigma$ are nonsingular.
\end{assumption}

\begin{definition}[Controllable set]\label{def:CS} 
	Given a set $\Omega \subset \X$, the controllable set to $\Omega$, $S(\Omega,\Sigma)$, is defined by
	\begin{equation}\label{eq:union}
	S(\Omega,\Sigma):=\bigcup_{i\in \Sigma} A^{-1}_i\Omega.
	\end{equation}
\end{definition}
As usual, the characterization of the invariance can be done with the analysis of the controllable set, i.e. a set is an invariant set if and only if it is contained in its controllable set \cite{Blanchinibook15}. In the context of switched systems we have the equivalent proposition:
\begin{proposition}\label{prop:invariant}
	Let the compact and convex set $\Omega\subset\X$ such that
	$\Omega \subseteq S(\Omega,\Sigma)$, then $\Omega$ is a SIS of system~\eqref{eq:SistOrig}.
\end{proposition}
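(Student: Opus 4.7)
The plan is to prove this by directly unpacking the two definitions: the containment $\Omega\subseteq S(\Omega,\Sigma)$ is essentially a rephrasing of the switched invariance condition. Specifically, I would pick an arbitrary $x\in\Omega$ and exhibit an index $\sigma\in\Sigma$ with $A_\sigma x\in\Omega$, which is all Definition~\ref{def:SIS} requires.

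The first step is to use the hypothesis $\Omega\subseteq S(\Omega,\Sigma)$ together with Definition~\ref{def:CS} of the controllable set to assert that
\[
x\in \bigcup_{i\in\Sigma} A_i^{-1}\Omega.
\]
Since this is a finite union (as $\Sigma=\{1,\dots,q\}$), there must exist at least one index $j\in\Sigma$ for which $x\in A_j^{-1}\Omega$. Here I rely on Assumption that the $A_i$ are nonsingular, so that $A_i^{-1}\Omega$ is well-defined as a genuine preimage set (equivalently, as the image under $A_i^{-1}$).

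The second step is simply to translate $x\in A_j^{-1}\Omega$ back into the forward condition $A_j x\in \Omega$, which holds by the definition of the preimage. Setting $\sigma:=j\in\Sigma$ furnishes the signal required by Definition~\ref{def:SIS}, and since $x\in\Omega$ was arbitrary the set $\Omega$ is a SIS.

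There is no real obstacle here; the argument is a one-line chasing of definitions. I would also note in the proof that the compactness and convexity hypotheses on $\Omega$ do not actually enter this direction of the equivalence (only nonsingularity of the $A_i$ is used), and are presumably assumed because they are the standing regularity hypotheses on invariant candidate sets that the remainder of Section~\ref{Sec_Invariance} will exploit (for instance, to ensure existence of maximal/minimal invariant sets or to pass to limits in subsequent constructions).
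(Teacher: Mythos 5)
Your proof is correct and is essentially identical to the paper's: both take an arbitrary $x\in\Omega$, use the hypothesis $\Omega\subseteq S(\Omega,\Sigma)=\bigcup_{i\in\Sigma}A_i^{-1}\Omega$ to extract an index $i$ with $x\in A_i^{-1}\Omega$, and conclude $A_i x\in\Omega$. Your added observation that compactness and convexity are not used in this argument is accurate but does not change the substance.
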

\begin{proof}
	Let $x\in\Omega$, then by equation \eqref{eq:union}, there exists $i\in \Sigma$ such that $x\in A^{-1}_i\Omega$. Therefore $A_{i}x\in\Omega$.
\end{proof}
Figure~\ref{fig:SIS} shows a set $\Omega$, such that $\Omega\subseteq \bigcup_{i\in \Sigma} A^{-1}_i\Omega$ but $\Omega \not\subseteq A^{-1}_i\Omega$ for any $i=1,2,3$, meaning that $\Omega$ is an invariant set for the switched system $x(k+1)=A_{\sigma(k)}x(k)$ with $\sigma(k)\in\Sigma=\{1,2,3\}$, but is not an invariant set for neither of the autonomous linear sub-systems $x(k+1)=A_ix(k)$ for $i=1,2,3$. This points to the fact that a switched system can be stable even if it is composed by unstable sub-systems \cite{Liberzon03}.

\begin{figure}
	\centering
	\includegraphics[width=0.65\textwidth]{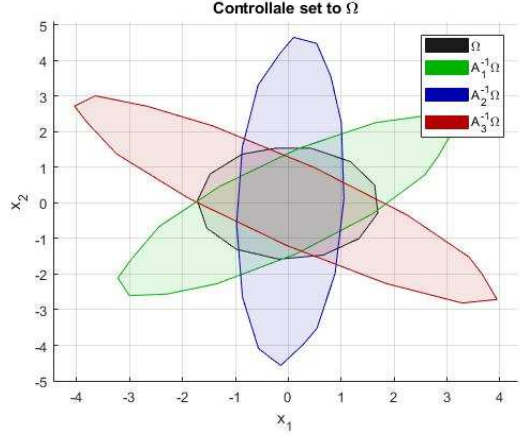}
	\caption{A switched invariant set, $\Omega$, according to Proposition~\ref{prop:invariant}. Figure shows that $\Omega\subset S(\Omega,\Sigma)$.}
	\label{fig:SIS}
\end{figure}

\begin{definition}[$i$-Step controllable set]\label{def:iSCS} 
	Given a set $\Omega \subset \X$, the $i$-step controllable set to $\Omega$, $S_i(\Omega,\Sigma)$, is defined by the following algorithm:
	\begin{enumerate}
		\item Initialization: $S_1(\Omega,\Sigma):=S(\Omega,\Sigma)$.
		\item Iteration: $S_i(\Omega,\Sigma):=S(S_{i-1}(\Omega,\Sigma),\Sigma)$.
	\end{enumerate}
\end{definition}

\begin{proposition}(\cite[Algorithm 1]{FiacchiniAUT14})
	Let $\Omega \subseteq \R^n$ be a $C^*$-set, if there exists $k\in\mathbb Z_{\ge 0}$ such that \[\Omega\subseteq \inti \bigcup_{j\in\N_{k+1}} S_j(\Omega,\Sigma),\] then there is a switching law stabilizing the switched system~\eqref{eq:SistOrig}.
\end{proposition}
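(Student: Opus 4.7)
The plan is to leverage the $C^*$-structure of $\Omega$ and the linearity of the dynamics to extract a uniform geometric contraction rate, and then use it to construct a switching law along which the state shrinks through nested scaled copies of $\Omega$.

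First, I would upgrade the topological hypothesis $\Omega \subseteq \inti \bigcup_{j\in\N_{k+1}} S_j(\Omega,\Sigma)$ to a clean algebraic dilation: there exists $\lambda > 1$ with
\[
\lambda\,\Omega \;\subseteq\; \bigcup_{j\in\N_{k+1}} S_j(\Omega,\Sigma).
\]
This is a compactness argument: if no such $\lambda>1$ existed, one could select $\lambda_n \downarrow 1$ and $y_n \in \Omega$ with $\lambda_n y_n \notin \bigcup_j S_j(\Omega,\Sigma)$; compactness of $\Omega$ would extract a limit point $y \in \Omega$, and then $\lambda_n y_n \to y$ would force $y$ to lie in the closed complement of the union, contradicting $y \in \inti \bigcup_j S_j(\Omega,\Sigma)$.

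Next, I would convert this containment, via linearity, into a one-cycle contraction. For any $x \in \Omega$, the point $\lambda x$ belongs to $S_{j(x)}(\Omega,\Sigma)$ for some $j(x) \in \N_{k+1}$, and unfolding Definition~\ref{def:iSCS} there exists a switching sequence $\sigma(0),\dots,\sigma(j(x)-1)$ with $A_{\sigma(j(x)-1)}\cdots A_{\sigma(0)}\,\lambda x \in \Omega$. Because each $A_i$ is linear, the same sequence applied to $x$ itself lands in $\lambda^{-1}\Omega$. Applying this reasoning after the rescaling $y = \lambda^{-m}z$, $z\in\Omega$, shows that any $y \in \lambda^{-m}\Omega$ can be steered, in at most $k+1$ steps, into $\lambda^{-(m+1)}\Omega$.

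Iterating this one-cycle contraction yields the desired switching law together with sampling times $0 = t_0 < t_1 < t_2 < \cdots$ satisfying $t_{m+1}-t_m \le k+1$ and $x(t_m) \in \lambda^{-m}\Omega$. Since $\lambda > 1$ and $\Omega$ is bounded, $\lambda^{-m}\Omega$ collapses to $\{0\}$ and the sampled state converges to the origin; Lyapunov-type boundedness between sampling instants follows from a self-similarity argument, noting that the trajectory on the $m$-th cycle is the $\lambda^{-m}$-scaling of a trajectory originating in $\Omega$, whose norm is uniformly bounded by compactness of $\Omega$ and the finite number of admissible matrix products of length at most $k+1$. The most delicate step is the first one: translating the topological ``interior of a union'' hypothesis into a uniform scaling factor $\lambda > 1$ is exactly what unlocks the linear scaling induction that carries the rest of the proof.
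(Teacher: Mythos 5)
Your proof is correct. Note that the paper itself gives no proof of this proposition; it is imported verbatim from the cited reference \cite[Algorithm 1]{FiacchiniAUT14}, and your argument is essentially the one used there: compactness of the $C^*$-set $\Omega$ upgrades the interior condition to a uniform dilation factor $\lambda>1$, linearity of the maps (and hence the scaling identity $S_j(\mu\Omega)=\mu S_j(\Omega)$) converts this into a one-cycle contraction of at most $k+1$ steps from $\lambda^{-m}\Omega$ into $\lambda^{-(m+1)}\Omega$, and geometric decay plus a uniform bound on the finitely many matrix products of length at most $k+1$ gives asymptotic stability of the origin. One small phrasing nit in your compactness step: the complement of the union need not be closed, so the clean way to reach the contradiction is to observe that the points $\lambda_n y_n$ lie in the complement of $\inti\bigcup_j S_j(\Omega,\Sigma)$, which \emph{is} closed, forcing the limit $y$ out of that interior and contradicting the hypothesis; this is clearly what you intended and does not affect the validity of the argument.
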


\begin{proposition}(\cite[Algorithm 3]{FiacchiniAUT14})
Let $\Omega \subseteq \R^n$ be a $C^*$-set, and define the set \[\hat{\Omega}_{k+1}:=\bigcup_{j\in\N_{k+1}}S_j(\Omega,\Sigma)\cup \Omega, \] 
	if there exists $k\in\mathbb Z_{\ge 0}$ such that \[ S_{k+1}(\Omega,\Sigma)\subseteq \hat{\Omega}_k,\] then there is no switching law stabilizing the switched system~\eqref{eq:SistOrig}.
\end{proposition}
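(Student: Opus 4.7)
The plan is to show that the hypothesis $S_{k+1}(\Omega,\Sigma)\subseteq\hat{\Omega}_k$ forces the family of $j$-step controllable sets to stop growing after step $k$. Together with the non-singularity of the matrices $A_i$, this produces a compact (hence bounded) set $\hat{\Omega}_k$ that coincides with the full set of initial states from which $\Omega$ is finite-time reachable; any initial condition outside $\hat{\Omega}_k$ therefore cannot be stabilized.

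The first step is a short induction on $m\ge 1$ establishing $S_{k+m}(\Omega,\Sigma)\subseteq\hat{\Omega}_k$. The base case $m=1$ is the hypothesis. For the induction step I would use $S_{j+1}(\Omega,\Sigma)=\bigcup_{i\in\Sigma}A_i^{-1}S_j(\Omega,\Sigma)$ from Definition~\ref{def:iSCS} together with the distributivity of inverse images over unions, so that the operator $S(\,\cdot\,,\Sigma)$ applied to any subset of $\hat{\Omega}_k$ lands back in $\hat{\Omega}_k$: one combines $\bigcup_{i\in\Sigma}A_i^{-1}\Omega=S_1\subseteq\hat{\Omega}_k$ with $\bigcup_{i\in\Sigma}A_i^{-1}S_j=S_{j+1}\subseteq\hat{\Omega}_k$ for $j=1,\dots,k-1$ (trivial) and for $j=k$ (the hypothesis). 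Consequently $\hat{\Omega}_m=\hat{\Omega}_k$ for every $m\ge k$, and the monotone union $\Omega\cup\bigcup_{j\ge 1}S_j(\Omega,\Sigma)$ — which is exactly the set of initial conditions from which $\Omega$ is reachable in finite time — equals $\hat{\Omega}_k$.

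Next I would note that $\hat{\Omega}_k$ is compact: $\Omega$ is a $C^{\ast}$-set, each $A_i^{-1}$ is a linear homeomorphism by the non-singularity assumption, and $\hat{\Omega}_k$ is a finite union of iterated preimages of a compact set. Therefore $\R^n\setminus\hat{\Omega}_k$ is non-empty; pick $x_0$ in the complement. If some switching signal $\vsig$ drove the corresponding trajectory of~\eqref{eq:SistOrig} asymptotically to the origin, then because $0\in\inti\Omega$ we would have $x(T)\in\Omega$ for some finite $T$, giving $x_0\in S_T(\Omega,\Sigma)\subseteq\hat{\Omega}_k$ and contradicting the choice of $x_0$. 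Hence no stabilizing switching law can exist.

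The main technical obstacle I foresee is the induction step: one has to verify carefully that $\hat{\Omega}_k$ is ``backward closed'' under the operator $S(\,\cdot\,,\Sigma)$, which is precisely where the assumption $S_{k+1}\subseteq\hat{\Omega}_k$ closes the loop between $S_k$ and $S_{k+1}$. A minor subtlety is the translation from asymptotic convergence to the origin into finite-time entry into $\Omega$, which rests on $0\in\inti\Omega$ and therefore does not need any continuity beyond that of the discrete trajectory.
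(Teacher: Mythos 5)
Your proposal is correct. Note, however, that the paper itself does not prove this proposition --- it is imported verbatim from \cite[Algorithm 3]{FiacchiniAUT14} --- so there is no in-paper argument to compare against; what you have written is essentially the standard argument underlying the cited result: the hypothesis $S_{k+1}(\Omega,\Sigma)\subseteq\hat{\Omega}_k$ makes $\hat{\Omega}_k$ invariant under the backward operator $S(\cdot,\Sigma)$, so the (compact, by nonsingularity of the $A_i$) set $\hat{\Omega}_k$ exhausts all states that can reach $\Omega$ in finite time, while asymptotic stabilization from any $x_0\notin\hat{\Omega}_k$ would force finite-time entry into $\Omega$ because $0\in\inti\Omega$. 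The only cosmetic inaccuracy is calling $\Omega\cup\bigcup_{j\ge 1}S_j(\Omega,\Sigma)$ a \emph{monotone} union --- the sets $S_j(\Omega,\Sigma)$ need not be nested --- but this plays no role in the argument.
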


\section{Switching set-based MPC (SwMPC)}\label{sec:mainresult}
This section introduces a proper set-based MPC for switched systems with guaranteed asymptotic stability of the closed-loop system.\\

For a given (fixed) horizon $N\in\mathbb{N}$, and a target set $\Omega\subseteq \X$, compact, convex and with the origin in its interior, the following (set-dependent) cost function is proposed:
\begin{eqnarray} \label{Cost}
J_N(x;\vsig):= \sum\limits_{j=0}^{N-1} c_{\sigma(j)}d_{\Omega}(x(j))+cd_{\Omega}(x(N))
\end{eqnarray}
where $x=x(0)$ is the current state; $x(j+1)=A_{\sigma(j)}x(j)$, for $j \in \mathbb Z_{N-1}$;  $\vsig$ is a switching discrete path, $c_{\sigma(j)}$ is a positive weight vector corresponding to signal $\sigma(j)$, and vector $c$ is a positive final weight.
%

Let us consider a binary variable $\alpha_i^j\in\{ 0,1\}$, for all $i\in\Sigma$ and $j\in\mathbb Z_{N-1}$, such that: 
\begin{itemize}
	\item $\alpha_i^j=1 \Rightarrow \sigma(j)=i$
	\item $\alpha_i^j=0 \Rightarrow \sigma(j)\not=i$.
\end{itemize}
Then, if we consider the set of integer optimization variables $\valpha=\{\alpha_i^j,~ i\in\Sigma, j\in\mathbb Z_{N-1} \}$, the sequence of signals $\vsig$ can be obtained by the matrix $\valpha$.

Consider a target set $\Omega\in\X$, and the initial state $x$ at time $k$. The optimization problem is defined as follow:
\begin{align} \label{EqProblem} 
\min_{\valpha} &~~J_N(x;\vsig(\valpha)) \\
\text{s.t. }\quad 
& x(0) = x,\label{EqProblem0}\\ 
& x(j+1)=\sum_{i=1}^{q}\alpha_i^jA_{\sigma(j)}x(j),~~~~~ j \in \mathbb Z_{N-1},\label{EqProblem1}\\
& \alpha_i^j\in\{0,1\},~~~~~~~~~~~~~~~~~~~~~~~~~~~~~~~~j \in \mathbb Z_{N-1},~~~ i \in \Sigma,\label{EqProblem2}\\
&\sum_{i=1}^{q}\alpha_i^j=1,~~~~~~~~~~~~~~~~~~~~~~~~~~~~~~~~ j \in \mathbb Z_{N-1},\label{EqProblem3}  \\
& x(j) \in \X,~~~~~~~~~~~~~~~~~~~~~~~~~~~~~~~~~~~ j \in \mathbb Z_{N-1} \label{EqProblem4}  \\
&\sigma(j)=\{i:\alpha_i^j=1 \},~~~~~~~~~~~~~~~~~j\in\mathbb Z_{N-1}\label{EqProblem5}\\
& x(N) \in\Omega. \label{EqProblem7}
\end{align}
Eq.~\eqref{EqProblem0} is the initialization of the problem. Eq.~\ref{EqProblem1} the evolution of the switched system in terms of the variable $\alpha_i^j$. Eq.~\ref{EqProblem2} the integer optimization variables. Eq.~\ref{EqProblem3} means that only one signal is applied in every step $j$. Eq.~\ref{EqProblem5} relates signal $\sigma(j)$ with the optimization variable $\alpha_i^j$, and Eq.~\ref{EqProblem7} is the final constraint which implies that the final predicted state belongs to the target set $\Omega$.

Problem~\eqref{EqProblem} does not necessarily meet the waiting time of Assumption~\ref{ass:UL}. To fulfill Assumption~\ref{ass:UL} we use Property~\ref{prop:dwelltime}. First, let us state that 
\[U=\max_{\sigma\in\Sigma}\{U_{\sigma}\}.\]
We define a memory switching path $\vsig_m$ by
\[ \vsig_m:=\{ \underbrace{\sigma^0(-U),\sigma^0(-U+1),\dots,\sigma^0(-1)}_{memory},\underbrace{\sigma(0),\sigma(1),\dots,\sigma(N-1)}_{predictions} \}, \]
where the first $U$ elements of $\vsig_m$ are the last $U$ optimal solutions applied to the real system, and the last $N-1$ elements of $\vsig_m$ are the $N$ predictions (or decision variables) of Problem~\eqref{EqProblem}.
\begin{remark}[Waiting time constraint]\label{rem_UL}
	For the closed-loop system satisfies Assumption~\ref{ass:UL} the following constraint should be added to the restrictions of Problem~\eqref{EqProblem}:
	\begin{align}\label{eq:restriction}
	L_{\sigma(j)}\leq\mid \mathcal{P}[\vsig_m,j]\mid\leq U_{\sigma(j)},~j\in\mathbb Z_{0:N-1}.
	\end{align}
	Note that the $j$-pack $\mathcal{P}[\vsig_m,j]$ -that accounts for all switching signals consecutively identical to the prediction signal $\sigma(j)$- must depend on the past optimal solutions that already enters the closed-loop system, that is the reason to consider the $j$-pack of $\vsig_m$.
\end{remark}

The control law, derived from the application of a receding horizon control policy is given by $\kappa_{MPC}(x)=\sigma^0(0)$, where $\sigma^0(0)$
is the first element of the solution sequence $\vsig^0$ of Problem~\eqref{EqProblem}. 
Therefore, the closed-loop system under the MPC law is described by
\begin{eqnarray}\label{SistCloosed}
x(k+1) = A_{\kappa_{MPC}(x(k))}x(k),
\end{eqnarray}
and the optimal cost function is given by
\begin{eqnarray}\label{costoptimal}
J^0_N(x)= J_N(x,\vsig^0(x)).
\end{eqnarray}

The domain of attraction of Problem~\eqref{EqProblem}, i.e. every state that can be feasible controlled by the SwMPC, is given by $S_N(\Omega,\Sigma)$.

\begin{remark}
	The existence of a SIS implies that the domain of attraction of Problem~\ref{EqProblem}, $S_N(\Omega,\Sigma)$, is not empty, since $\Omega\subseteq S_N(\Omega,\Sigma)$.
\end{remark}
%

%

The control algorithm executed at any $k$-th time instant is the following:\\

\begin{algorithm}
	\caption{Switched MPC algorithm} \label{control_algo}
	\begin{algorithmic}[1]
		\Require $N\in\mathbb{N}$, $\X \subset \mathbb{R}^n$ and $\Omega\subseteq \X$
		\State Read $x(k)$
		\State Solve \eqref{EqProblem} subject to \eqref{EqProblem0}-\eqref{EqProblem7}
		\State Inject $\sigma^0(0)$ into the system.
		\State $k \gets k+1$
		\State Go back to 1	
	\end{algorithmic}
\end{algorithm}

The resulting optimization problem is a Mixed Integer Quadratic Programming (MIQP), which can be solved by specific solvers. For the simulations of the present work, Algorithm \ref{control_algo} is implemented in YALMIP, a Toolbox for Modeling and Optimization in MATLAB \cite{Lofberg2004}. The main idea of YALMIP is providing an efficient tool for writing high-level algorithm in MATLAB \cite{Matlab}, while relying on external solvers for the low-level numerical solution of optimization problems. In this work we make use of the Gurobi Optimizer (Version 8.1, Academic License), which in turn relies on a branch-and-bound algorithm to solve MIQP problems \cite{gurobi,Branch-and-bound1960}.\\

Before present the stability proof we must assume the following.
\begin{assumption}\label{ass:stabilizable}
	Switched system~\eqref{eq:SistOrig} is stabilizable.
\end{assumption}

\begin{theorem}\label{theo:convergence}
	Let Assumption~\ref{ass:stabilizable} holds, and consider a target set, $\Omega\subset\X$, of cost~\eqref{Cost}, be a SIS for system~\eqref{eq:SistOrig}. Let the initial state $x(i)\in S_N(\Omega,\Sigma)$,  then the set $\Omega$ is asymptotically stable for the closed-loop system~\eqref{SistCloosed}.
\end{theorem}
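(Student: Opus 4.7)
The plan is to follow the standard set-based MPC stability template adapted to switched systems, using the optimal cost $J^0_N$ as a Lyapunov function for the set $\Omega$. Throughout, feasibility of Problem~\eqref{EqProblem} at the current state $x(k) \in S_N(\Omega,\Sigma)$ will be exploited by means of the SIS property of $\Omega$.

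\textbf{Step 1 (Recursive feasibility).} Let $\vsig^0(k) = \{\sigma^0(0),\dots,\sigma^0(N-1)\}$ be the optimizer at time $k$, with associated predicted trajectory $x^0(0)=x(k),\,x^0(1),\dots,x^0(N)\in\Omega$. I would build a candidate sequence for time $k+1$ by shifting and appending one signal:
\[
\tilde{\vsig}(k+1) = \{\sigma^0(1),\dots,\sigma^0(N-1),\sigma^\star\},
\]
where $\sigma^\star\in\Sigma$ is provided by the SIS property applied to $x^0(N)\in\Omega$, so that $A_{\sigma^\star}x^0(N)\in\Omega$. The first $N-1$ elements inherit their state-constraint satisfaction from the previous optimal trajectory; the new terminal state lies in $\Omega\subseteq\X$ by construction. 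Hence $\tilde{\vsig}(k+1)$ is a feasible (suboptimal) solution at $x(k+1)=A_{\sigma^0(0)}x(k)$, which in turn shows $x(k+1)\in S_N(\Omega,\Sigma)$ and keeps the domain of attraction forward invariant.

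\textbf{Step 2 (Cost decrease).} Using the suboptimal sequence $\tilde{\vsig}(k+1)$ to upper-bound the optimal cost, and exploiting that $d_\Omega(x^0(N))=0$ because $x^0(N)\in\Omega$ and $d_\Omega(A_{\sigma^\star}x^0(N))=0$ because $A_{\sigma^\star}x^0(N)\in\Omega$, a direct telescoping computation gives
\begin{align*}
J^0_N(x(k+1)) - J^0_N(x(k))
&\le J_N\bigl(x(k+1);\tilde{\vsig}(k+1)\bigr) - J_N\bigl(x(k);\vsig^0(k)\bigr)\\
&= -\,c_{\sigma^0(0)}\,d_\Omega(x(k)) + c_{\sigma^\star}\,d_\Omega(x^0(N)) + c\,d_\Omega(A_{\sigma^\star}x^0(N)) - c\,d_\Omega(x^0(N))\\
&= -\,c_{\sigma^0(0)}\,d_\Omega(x(k)).
\end{align*}
Since the weights $c_{\sigma(j)}$ are strictly positive, there exists $\underline{c}>0$ with $c_{\sigma^0(0)}\ge\underline{c}$, so $J^0_N$ is non-increasing and strictly decreases whenever $x(k)\notin\Omega$.

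\textbf{Step 3 (Lyapunov bounds and conclusion).} To conclude asymptotic stability of $\Omega$, I would verify that $J^0_N$ is a valid Lyapunov function with respect to the set $\Omega$. The lower bound $J^0_N(x)\ge\underline{c}\,d_\Omega(x)$ follows directly from~\eqref{Cost} and the non-negativity of distance terms. For the upper bound I would use that $\Omega$ is a SIS together with the compactness of $\Omega$ and continuity of the matrices $A_i$: locally around $\Omega$, one can construct a feasible sequence whose cost is bounded by a $\mathcal{K}$-class function of $d_\Omega(x)$ (e.g.\ by picking the SIS signal repeatedly from any $x$ close enough to $\Omega$ and invoking Lipschitz continuity of $d_\Omega(\cdot)$ and of the linear maps). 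Combined with Step~2, this yields stability in the Lyapunov sense, while the strict descent $J^0_N(x(k+1))\le J^0_N(x(k))-\underline{c}\,d_\Omega(x(k))$ telescoped forward gives $d_\Omega(x(k))\to 0$, hence attractivity. Asymptotic stability of $\Omega$ for the closed-loop~\eqref{SistCloosed} follows.

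\textbf{Main obstacle.} The recursive-feasibility step is immediate thanks to the SIS assumption, and the decrease estimate is a clean telescoping. The delicate point I expect is the upper Lyapunov bound: the cost $J^0_N$ is the minimum over a combinatorial family of integer decisions, so I cannot simply appeal to continuity of the value function. I would circumvent this by exhibiting an explicit suboptimal strategy near $\Omega$ (using the SIS signal for all $N$ predictions) and bounding the resulting stage costs in terms of $d_\Omega(x)$, noting also that if Remark~\ref{rem_UL} is enforced, the candidate shift $\tilde{\vsig}(k+1)$ and the ``all-SIS'' suboptimal sequence both need to be compatible with the waiting-time window $[L_\sigma,U_\sigma]$, which would have to be assumed or handled by an appropriate choice of $\sigma^\star$.
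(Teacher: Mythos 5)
Your proposal is correct and follows essentially the same route as the paper's own proof: shift the optimal switching sequence, append a signal furnished by the SIS property of $\Omega$ to obtain recursive feasibility, and telescope the cost difference to get $J^0_N(x(k+1))-J^0_N(x(k))\le -c_{\sigma^0(0)}d_\Omega(x(k))$, from which $d_\Omega(x(k))\to 0$. Your Step~3 (the upper Lyapunov bound needed for stability in the Lyapunov sense, and the caveat about compatibility with the waiting-time constraints of Remark~\ref{rem_UL}) is actually more careful than the paper, which stops at attractivity and does not address either point.
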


\begin{proof} 
	Since Assumption~\ref{ass:stabilizable} holds and the initial state at time instant $i$ is such that $x(i)\in S_N(\Omega,\Sigma)$, there is an optimal solution of Problem~\eqref{EqProblem}, given by, \[\vsig^0(x):=\{\sigma^0_0,\sigma^0_1,\ldots,\sigma^0_{N-1} \},\] corresponding with the optimal state sequence,
	\[\vx^0(x):=\{x^0_0, x^0_1,\ldots,x^0_N\},\] such that $x^0_0=x(i)$ and $x^0_N\in \Omega$ (by~\eqref{EqProblem0} and~\eqref{EqProblem7}).
	Then, the optimal cost at time $i$, $J^0_N(x(i))$, is given by,
	\[
	J^0_N(x(i)) = \sum_{j=0}^{N-1} c_{\sigma_j}d_{\Omega}(x^0_j)+ \underbrace{cd_{\Omega}(x^0_N)}_{=0}.
	\]
	Since the target set $\Omega$ is a SIS for switched system~\eqref{eq:SistOrig}, there is a switching signal $\bar{\sigma}\in \Sigma$ such that $\bar{x}=A_{\bar{\sigma}}x^0_N \in \Omega$. 
	Therefore, the sequence, \[\bar{\vsig}=\{\sigma^0_1,\sigma^0_2,\ldots,\sigma^0_{N-1},\bar{\sigma} \},\] is a feasible solution of Problem~\eqref{EqProblem} at time $i+1$, with the feasible state sequence, \[\bar{\vx} =\{x^0_1,x^0_2,\ldots,x^0_N,\bar{x}\}.\]
	This feasible solution corresponds to the following cost:
	\begin{align*}	
	J_N(x(i+1);\bar{\vsig})=\sum_{j=1}^{N-1}c_{\sigma_j}d_{\Omega}(x_j).
	\end{align*}
	Note that, by manipulating $J^0_N(x(i))$ and $J_N(x(i+1);\bar{\vsig})$, we have the following equation:
	\begin{align*}
	J_N(x(i+1);\bar{\vsig}) - J^0_N(x(i)) &=   -c_{\sigma_0}d_{\Omega}(x_0) \\ \nonumber
	&=   -c_{\sigma_0}d_{\Omega}(x(i)).
	\end{align*}
	Therefore, the optimal cost function at time $i+1$, $J^0_N(x(i+1))$, is such that, 
	\begin{align}\label{eq:decreasing_optimal_cost}
	J^0_N(x(i+1)) - J^0_N(x(i))&\leq J_N(x(i+1);\bar{\vsig})-J^0_N(x(i))\\ 
	&= -c_{\sigma_0}d_{\Omega}(x(i)),\nonumber
	\end{align}
	Eq.~\eqref{eq:decreasing_optimal_cost} implies that the positive sequence $J^0_N(x(k))$ decrease when $k\rightarrow\infty$, thus it is easy to prove that $J^0_N(x(k))\rightarrow 0$ when $k\rightarrow\infty$. Therefore, by Eq.~\eqref{eq:decreasing_optimal_cost}, \[d_{\Omega}(x(k))\rightarrow 0,\] when $k\rightarrow \infty$, which means the closed-loop system converges to the desired target set $\Omega$.
\end{proof}

\subsection{Illustrative Example}
Consider the system~\ref{eq:SistOrig} with $\Sigma=\{1,2,3,4\}$, $n = 2$ and 

\begin{equation} \notag
\begin{split}
A_1=\left[ \begin{array}{c c}
1.5 & 0  \\
0 & -0.8 \end{array} \right] , ~~~ A_2=1.1 R(\frac{2\pi}{5}).	 
\end{split}
\end{equation}
\begin{equation} \notag
\begin{split}
A_3= 1.05 R(\frac{2\pi}{5}-1) , ~~~ A_4=\left[ \begin{array}{c c}
-1.2 & 0 \\
1 & 1.3 \end{array} \right].	 
\end{split}
\end{equation}
where $R(\cdot)$ is the rotation matrix. This is an example of a stabilizable switched system settled for 4 not Schur sub-systems $A_1$, $A_2$, $A_3$ and $A_4$ (see~\cite[Example 2]{FiacchiniAUT14}). 
\begin{figure}[H]
	\centering
	\includegraphics[width=0.65\textwidth]{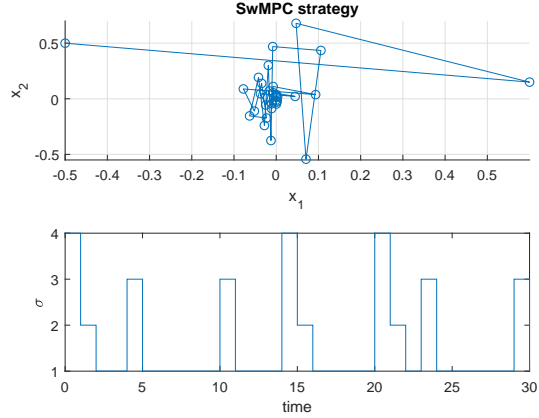}
	\caption{Optimal trajectory (top figure). Optimal switching signal sequence (down figure).}
	\label{fig:SwMPCexample}
\end{figure}
Figure~\ref{fig:SwMPCexample} shows the application of the proposed SwMPC with a prediction horizon $N=15$, a target set is $\Omega=\{\vec{0}\}$ (particular case of SIS). The state constraint $\{x:-10\le x_1\le 10;-10\le x_2\le 10\}$, and simulation time of $T=30$. The initial state $x_0=[-0.5,0.5]$ belongs to the $N$-controllable set $S_{N}(\Omega,\Sigma)$.


\section{Applications to biomedical problems}\label{Sec_Application}
This section presents two particular applications of the SwMPC previously discussed.

\subsection{Drug schedules to mitigating viral escape}\label{Sec:HIV}
We focus on the problem of treatment scheduling to minimize the adverse effects of virus mutation in acute and chronic infections. 
Acute infections are caused by virus that grows out but it is cleared in a short period. On the other hand, in chronic viral infections the virus grows slowly promoting persistence.
In both scenarios, the main problematic is given by the rise to drug resistance. 
For this work we focus on the virus mutation treatment problem. For this reason we use a model for mutation dynamics that is simple enough to allow control analysis and optimization of treatment switching \cite{hernandez2011discrete}. The model is given by:
\begin{align}\label{eq:simplermodel}
\dot{V}_i(t)=& \rho_{i,\sigma(t)}V_i(t)-\delta V_i(t)+\sum_{i\not = j}\mu m_{i,j}V_j(t)
\end{align}
where $\mu$ is a small parameter representing the mutation rate, $\delta$ is the death or decay rate of the virus and $m_{i,j}\in\{0,1\}$ represents the genetic connections between genotypes, that is, $m_{i,j}=1$ if and only if
it is possible for genotype $j$ to mutate into genotype $i$. Equation \eqref{eq:simplermodel} can be rewritten as
\begin{align}\label{eq:simplermodelVector}
\dot{V}(t)= &(R_{\sigma(t)} - \delta I)V(t)+\mu M V(t)
\end{align}
where $M:=[m_{i,j}]$ and $R_{\sigma(t)}:=diag\{\rho_{i,\sigma(t)}\}$, and every element of $V(t)$ is a particular genotype.

We take a model with four genetic variants and two possible drug therapies, as shown in Figure~\ref{fig:genotipos}. 

\begin{figure}[H]
	\centering
	\includegraphics[width=0.35\textwidth]{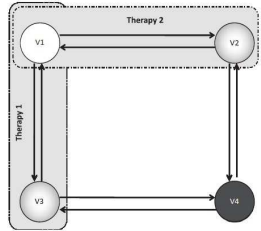}
	\caption{The virus $V_1$ is susceptible to both therapies. $V_2$ is susceptible to therapy 2 while $V_3$ is susceptible to therapy 1. There is a highly resistant genotype ($V_4$) which is resistant to therapy 1 and 2.}
	\label{fig:genotipos}
\end{figure}
Figure~\ref{fig:genotipos} shows a mutation graph that is symmetric and circular, only connections: $V_1(t)\leftrightarrow V_2(t)$, $V_2(t)\leftrightarrow V_4(t)$, $V_4(t)\leftrightarrow V_3(t)$, $V_3(t)\leftrightarrow V_1(t)$ are possible. This leads to the mutation matrix:
\begin{eqnarray}\label{eq:conectionM} 
M=\left[ \begin{array}{cccc}
0 & 1 & 1 & 0 \\ 
1 & 0 & 0 & 1 \\
1 & 0 & 0 & 1 \\
0 & 1 & 1 & 0 \\
\end{array} \right]
\end{eqnarray}

\subsubsection{Numerical results}
This section provided some simulations results to schedule the treatment for viral mutation problems. First, we will define therapeutic strategies that are recommended in clinics:
\begin{itemize}
	\item[i.] The \emph{switching on virologic failure} (VF) strategy, recommended by \cite{AIDSInfo13}, introduce a new regimen when there is detectable viremia (Virus RNA $>1000~copies/ml$) and a drug-resistant genotype is identified. 
	\item[ii.] The SWATCH approach, recommended by \cite{Martinez08}, is based on the possibility of preempt virologic rebound; this strategy reduces the accumulating drug-resistant genotypes by alternating between the two regimes every three months while viral load is suppressed. 
\end{itemize}

System \eqref{eq:simplermodelVector} is described in discrete-time with a regular treatment interval $\tau=28~days$; during this time interval the treatment is considered to be fixed. If $k\in\N$ denotes the number of intervals, equation \eqref{eq:simplermodelVector} can be described by the following discrete-time switched linear system~\eqref{eq:SistOrig}, where $x(k)=x(k\tau)$ is the sampled state and $A_{\sigma}=e^{(R_{\sigma}-\delta I +\mu	M)\tau}$. The state is constrained to $x(k)\in\X:=\R_{\geq 0}$, and $\sigma(k)\in\{1,2\}$ for all $k\in\Z_{\ge 0}$.

Viral mutation rates are about $\mu=10^{-4}$ and the connection matrix by~\eqref{eq:conectionM}. We consider the initial condition 
\begin{align}\notag
V_1(0)&=1000\mbox{ copies/ml},~~V_2(0)=\mu V_1(0),\notag\\
V_3(0)&=\mu V_1(0),~~V_4(0))=\mu V_2(0)+\mu V_3(0),
\end{align}
and the viral clearance rate is $\delta=0.24/day$, which corresponds to a half life less than 3 days. As we mentioned before, the decision time is $\tau=28~\mbox{days}$, for a period of $T=336~\mbox{days}$. 

The rates of the viral replication under treatment $\sigma$, $R_{\sigma}$, can define several cases, among them the chronic and the acute infection scenarios. In \cite[Chapter~7]{Vargasbook19} several chronic scenarios for replication rates are presented; some ideal cases describe a complete symmetry between genotypes $V_2$ and $V_3$: therapy 1 inhibits $V_3$ with the same intensity as therapy 2 inhibits $V_2$. More detailed models also include asymmetry in the genetic tree with more complex structure than a simple cycle. Another scenarios show an asymmetry for replication rates in genotypes $V_2$ and $V_3$, although both therapies induce the same replication in genotypes $V_1$ and $V_4$ (Scenario 1, Table~\ref{tab:replicationrates}). Another scenarios are more realistic, each genotype experience different dynamics to a new treatment. However, all these scenarios represent chronic infections. In this work we proposed a new one corresponding to an acute infection (e.g. influenza), characterized by a rapid development of the viral load, which however may be cleared in short time (Scenario 2, Table~\ref{tab:replicationrates}). 

\begin{table}
	\centering
	\begin{tabular}{p{1cm} p{1.8cm} p{1.8cm} p{1.8cm} p{1.8cm}}
		
		Scenario & \centering $V_1$ & \centering$V_2$ & \centering $V_3$ &  ~~~~~~~$V_4$\\
		\hline
		\centering$1$  & $\rho_{1,1}=0.05$ &$\rho_{2,1}=0.28$ & $\rho_{3,1}=0.01$ & $\rho_{4,1}=0.27$ \\
		& $\rho_{1,2}=0.05$ &$\rho_{2,2}=0.20$ & $\rho_{3,2}=0.25$ & $\rho_{4,2}=0.27$ \\
		
		\hline
		\centering$2$  & $\rho_{1,1}=0.05$ &$\rho_{2,1}=0.40$ & $\rho_{3,1}=0.05$ & $\rho_{4,1}=0.23$ \\
		& $\rho_{1,2}=0.05$ &$\rho_{2,2}=0.05$ & $\rho_{3,2}=0.40$ & $\rho_{4,2}=0.23$ \\
		\hline
	\end{tabular}
	\caption{Replication rates ($R_{\sigma}$) for viral variants and therapy combinations. Scenarios 1 and 2 represent chronic and acute infections respectively.}
	\label{tab:replicationrates}
\end{table}

The \emph{total viral load} at time instant $k$, $V_{total}(k)$,  is defined by $V_{total}(k)=\sum_{i=1}^4V_i(k)$, where $V_i(k)$ is the viral load of genotype or strain $i$ at time instant $k$.

The therapeutic strategies that we are going to test are the switching on virologic failure (VF) and the SWATCH approach, both presented above.
On the other hand, for the presented scenarios the optimal solution will be computed by the ``brute force'' approach, which analyzes the best numerical solution of all possible combinations for therapies $1$ and $2$ with decision time $\tau=28~days$ for a period of $T=336~days$. That is, $2^{T/\tau}$ possible treatment combinations are evaluated and the sequence of treatments that gives
the least amount of total viral load over the whole period of time is chosen (i.e., the one that minimizes the cost~\eqref{eq:CostOrigin}).
Notice that this approach has more computational complexity as the period of time is incremented or the treatment interval is reduced, making it a not implementable optimization.

Simulations for the \emph{switching on virologic failure} show that initially the total viral load drops rapidly for the chronic infection. However, the appearance of resistant genotype will drive a virologic failure after 200 days making a new therapy necessary. 
The acute infection has a very different behavior; unlike the others cases, the system can be stabilized, i.e. the total viral load can be driven to undetectable levels ($V_{total}(T)\le 50~~\mbox{copies/ml}$). However, since the therapy changes regimen by an exceeding of an upper bound, the total viral load does not reach its minimum values and shows an oscillating behavior. 
The SWATCH strategy shows - as previously highlighted by \cite{hernandez2011discrete} - a better performance than the switching on virologic failure: a lower concentration in the total viral load over the year for chronic infections, while the viral population is cleared in acute infection.
The optimal solution for the chronic infections is given by ``brute force'' approach, in order to compare performance of the proposal with the best possible result.
In the chronic infection scenario there is always a viral escape, because the high resistance genotype rises with resistance for the two regimens. 

\subsubsection{MPC-based scheduling method}
The same scenarios studied above will be tackled by the MPC proposed in this work. 
A prediction horizon of $N=5$ is considered - equivalent to $5\tau ~\mbox{days}$ - with the decision time $\tau=28~\mbox{days}$ for a period of $T=336~\mbox{days}$. The objective of the controller is to drive the total viral load to undetectable levels ($V_{total}(T)<50~\mbox{copies/ml}$). Since the objective can not be maintained over time for chronic infections due to the promoting persistence of high resistance genotypes it is expected that the proposed strategy delays the viral escape time. 
\begin{figure}
	\centering
	\includegraphics[width=0.75\textwidth]{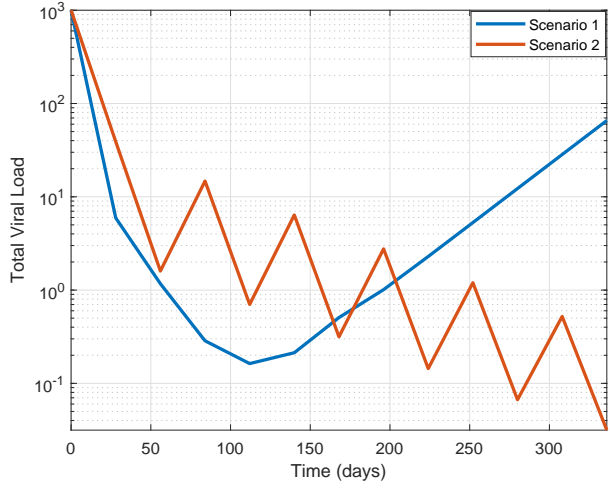}
	\caption{Total viral load for chronic and acute scenarios under the SwMPC strategy.}
	\label{fig:SwMPC_1}
\end{figure}
Figure~\ref{fig:SwMPC_1} shows  the two scenarios under the proposed SwMPC. As it can be seen, the controller suppresses the viral load, for both cases. In the chronic infection scenario the total viral load is maintained below the virologic failure levels, while in acute infections it is completely cleared. 
Figure~\ref{fig:SwMPCSc3}, on the other hand, shows the behavior of all genotypes only for Scenario 2, together with the switching sequence provided by the SwMPC. The sequence is not intuitive at all, since therapy 1 is used only three times throughout the year of treatment, in the third, sixth and eleventh month.
\begin{figure}[H]
	\centering
	\includegraphics[width=0.75\textwidth]{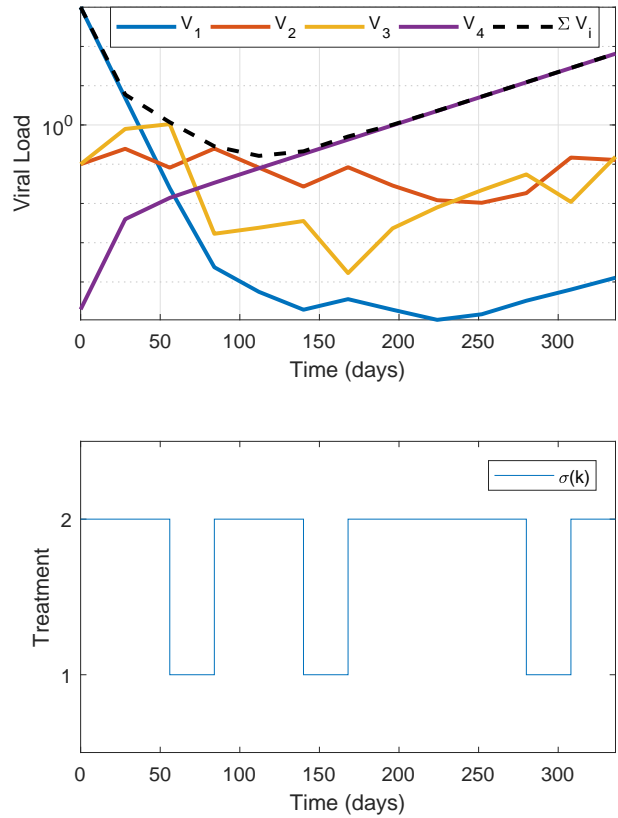}
	\caption{Viral load for Scenario 2 and treatment by the SwMPC.}
	\label{fig:SwMPCSc3}
\end{figure}
\begin{remark}
	It is important to highlight that in chronic infection scenarios, where the system cannot be stabilized, the viral escape cannot be avoided. However, the simulation results suggest that the proposed MPC delays the escape time, which is considerably beneficial in this context.
\end{remark}

In chronic infections the virus grows slowly, promoting this way persistence and producing an unavoidable viral escape. This fact makes the switched system that models the infection dynamic essentially non stabilizable and, so, it will not be possible to drive the system to the undetectable virus zone and keep it there indefinitely.
So, the following index is proposed to compare the existing strategies with the proposal:
\begin{align}\label{Eq:Index}
\mathcal{I}_T = \sum_{k=0}^{T}V_{total}(k),
\end{align}
where $V_{total}(k)$ is the \emph{total viral load} at time instant $k$

The best performance is obtained by the optimal solution computed by ``brute force'' approach, according to Table~\ref{Tab:performance}.  
\begin{table}[H]
	\centering
	\begin{tabular}{p{1cm} p{1.5cm} p{1.5cm} p{1.5cm} p{1.5cm}}
		
		\centering	Scenario &  \centering SWATCH & \centering VF & OPTIMAL &  SwMPC\\
		\hline 
		
		\centering$1$  & \centering$1587.1$ & \centering$5277.9$ &$1108.4$ & $1123.3$  \vspace{.3cm}\\

		\centering$2$  & \centering$1175.6$ & \centering$12075.0$ &$1067.4$ & $1067.6$  \\
		
		\hline
	\end{tabular}
	\caption{Performance index $\mathcal{I}_T$ for all strategies.}
	\label{Tab:performance}
\end{table}
The indexes in Table~\ref{Tab:performance} reveal that the proactive switching strategies may outperform the ``switched on virologic failure'' strategy, as it was previously stated in \cite{hernandez2011discrete}. 
Nevertheless, the proposed SwMPC provides better results than SWATCH treatment, and exhibits almost the same performance than the optimal solution, in all cases, which is a result to be highlighted considering that the MPC is an implementable strategy, which is robust to model-plant mismatches, explicitly considers constraints and has a low computational burden.

In the next section we seek to address a controller synthesis problem for cancer treatment. 

\subsection{Drug schedules for cancer treatment}\label{Sec:Cancer}
Cancer cells have the potential to develops resistance to therapies. This is a complex phenomenon influence by multifaceted mechanisms. Mathematical models are important tools to further understand cancer cell dynamics and the respective outcomes during therapies \cite{Kumar19}. In particular model the Triple Negative breast cancer cell line, HCC1143, with response to different drugs was obtained by a linear time-invariant system identified in \cite{ChapmanCDC16} and \cite{RisomNat18}. 
The analysis of drug schedules in terms of switched systems was previously study in \cite{ChapmanCDC18}, with the assumptions that the response to a drug applied at time $k$ does not depend on the drugs applied previously and there are some drugs that can shrink the live cancer cell population, but are very toxic to healthy cells, and other drugs that only slow the growth of the live cancer cell population, but are less toxic to healthy cells.\\

Cancer cell population can be partitioned into a finite number of observable traits that arises from the synthesis of proteins that has important implications for drug response, called phenotypic states; e.g., see \cite{GoldmanNat15}. The model of drug-treated cancer cell population is given by the switched system~\eqref{eq:SistOrig}, with $x(k) \in \R_+^n$, the non-negative cell type vector, the elements of the sate $x(k)$ are the number of live cells in the phenotypic state $i=1,2,\dots,n$, the switched signal $\sigma$ belongs to a set of drugs $\Sigma$, and the matrix, $A_{\sigma}$, takes the form,
\begin{eqnarray}\label{matrizAsigma} 
A_{\sigma}=\left[ \begin{array}{cccc}
\alpha_1 & \rho_{21} & \dots& \rho_{n1} \\ 
\rho_{12} & \alpha_2& \dots& \rho_{n2}\\
\vdots& \vdots& \ddots & \vdots\\
\rho_{1n}& \rho_{2n}& \dots & \alpha_n\\
\end{array} \right],
\end{eqnarray}
where $\alpha_i:=\rho_i-\rho_{iD}-\sum_{s=1,s\ne i}^n \rho_{is}$; for all $i=1,\dots,n$. The parameters $\rho_i=\rho_i(\sigma)$, $\rho_{iD}=\rho_{iD}(\sigma)$, and $\rho_{ij}=\rho_{ij}(\sigma)$ are defined in Table~\ref{Tab:drugforCancer} (see~\cite{ChapmanCDC18}).  

\begin{table}[H]
	\centering
	
	\begin{tabular}{p{2.5cm} p{2.2cm} p{6.5cm}}
		\hline
		Symbol &   Name &  Definition \\
		\hline 
		$\rho_i=\rho_i(\sigma)$  &  Division parameter &   Average ratio of the number of live cells in phenotypic state $i$ at time $k+1$, derived from their own kind, to the number of live cells in phenotypic state $i$ at time $k$ \\
		\hline
		$\rho_{iD}=\rho_{iD}(\sigma)$  &  Death gain &  Average proportion of live cells in phenotypic state $i$ at time $k$ that begin to die, or are dead, by time $k + 1$\\ 
		\hline		
		$\rho_{ij}=\rho_{ij}(\sigma)$  &  Transition gain &  Average proportion of live cells in phenotypic state $i$ at time $k$ that transition to phenotypic state $j$ by time $k + 1$; $i\neq  j$\\
		\hline
	\end{tabular}
	\caption{Dynamics parameters for drug $\sigma$ (taken from \cite{ChapmanCDC18}).}
	\label{Tab:drugforCancer}
\end{table}
Every treatment is subject to toxicity issues or to the onset of resistance. It is therefore important to ensure a particular waiting time for every drug (Assumption~\ref{ass:UL}). In addition, the schedule of treatments are applied by cycles; a cyclic $m$ is a sequence of drugs defined by,
\begin{align}\label{eq_cyclic}
\{(\sigma_{1m},h_{1m}),(\sigma_{2m},h_{2m}),\dots,(\sigma_{qm},h_{qm}) \}_{m=1}^{\infty},
\end{align}
such that $\sigma_{im}\in\Sigma$ is the $i^{th}$ drug applied in cyclic $m$ and $h_{im}$ is the waiting time for drug $\sigma_{im}$, such that $L_{\sigma_{im}}\le h_{im} \le U_{\sigma_{im}}$, and $\cup_{i=1}^q \sigma_{im}=\Sigma$ for each $m$. The waiting times are design such that the treatment regimen shrinks the live cancer cell population over time, while limiting the toxicity to normal cells or avoiding the onset of drug resistance. 

\subsubsection{Numerical results}
Drug treatment for Triple Negative breast cancer cell line belong to the set $\Sigma =\{Trametinib+BEZ235, BEZ235, Trametinib\}$ (see Remark~\eqref{rem_drugs}), the cell phenotype measurements were recorded every 12 hours over a 72-hour horizon, and the dynamics matrix was estimated from the data for $n = 2$ phenotypic states \cite{RisomNat18}.

The dynamic matrices, $A_{\sigma}$, for the live cancer system are given by,
\begin{align*}
A_P=\left[ \begin{array}{cc}
0.755 & 0.081  \\ 
0.169 & 0.843\\
\end{array} \right],~ A_B=\left[ \begin{array}{cc}
0.896 & 0 \\ 
0.186 & 1.083\\
\end{array} \right], ~A_T=\left[ \begin{array}{cc}
1.030 & 0.231  \\ 
0.022 & 0.821\\
\end{array} \right],
\end{align*}
where drug $P$ is $Trametinib+BEZ235$, drug $B$ is $BEZ235$, and drug $T$ is $Trametinib$. The initial condition, $x_0 :=[220; 612]^T$, is the estimated number of live cells in each phenotypic state at time zero averaged over fifteen samples (see~\cite{ChapmanCDC18}). 
The waiting times of Assumption~\ref{ass:UL} are given by $L_i := 2$ (1 day) for all $i\in\Sigma$, $U_P := 4$ (2 days), $U_B := 8$ (4 days), and $U_T := 6$ (3 days).
\begin{remark}\label{rem_drugs}
	Drug $Trametinib+BEZ235$ can shrink the live cancer cell population but is highly toxic to normal cells, and is related with the \emph{stable sub-system}, $A_P$; drugs $BEZ235$ and $Trametinib$ are less toxic to normal cells but can only reduce the growth rate of live cancel population, and are related with the \emph{unstable sub-systems}, $A_B$ and $A_T$, respectively. 
\end{remark}

The cyclic optimal schedule of the proposed SwMPC is given by, 
\begin{align}\label{eq_solution}
\{(P,4),(T,2),(B,2) \}_{m=1}^{\infty},
\end{align}
which means that for every cyclic $m$, drug $P$ is applied $4$ consecutive times, then drug $T$ is applied $2$ consecutive times followed by drug $P$ applied $2$ consecutive times as well. 
\begin{remark}\label{rem_solution}
	Cyclic~\eqref{eq_solution} is a trivial solution to stabilize the switched system, because according to Eq.~\eqref{eq_solution} the unique stable sub-system, $A_P$, is applied to  the switched system~\eqref{eq:SistOrig} its maximum waiting time (saturating the application of this drug), and the unstable sub-systems, $A_T$ and $A_B$, are applied, by restrictions, just their minimum waiting times (in order to return the employment of the stable sub-system).
\end{remark}

In what follows, we penalize by cost the \emph{consecutive} use of each treatment instead of the constraint of maximum waiting time $U_{\sigma}$. Consider the following modified cost function
\begin{align}\label{eq_CostToxicity}
\bar J_N(x;\vsig):=  J_N(x;\vsig) + \sum\limits_{j=0}^{N-1} b_{\sigma(j)}h^{\sigma(j)}(j),
\end{align}
where $J_N(x;\vsig)$ is given by Eq.~\eqref{Cost}, $b_{\sigma}$ is a positive weight that penalize the \emph{consecutive} use of treatment $\sigma$, and,
\begin{align*}
h^{\sigma(j)}(j)&=\mid \mathcal{P}[\vsig,j] \mid^2
\end{align*}
is the square cardinal of the $j$-pack $\mathcal{P}[\vsig,j]$, which accounts all switching signals consecutively identical to the prediction signal $\sigma(j)$, therefore term $h^{\sigma}(\cdot)$ penalizes the consecutive use of signal $\sigma$.\\

The simulations will be performed with the same conditions presented before, but regardless of the cycles and the maximum waiting times $U_{\sigma}$ of Assumption~\ref{ass:UL}, in order to extend the constraints of the optimization problem and therefore analyze a less limited result. Illustratively, we will consider three different cases based on the concept that there are some drugs that are very toxic to healthy cells and other drugs that are less toxic to healthy cells.
\begin{itemize}
	\item[] Case 1. $b_P = b_T = b_B$. The consecutive use of every treatment is equally penalized by the cost~\eqref{eq_CostToxicity}.
	\item[] Case 2. $b_P > b_T = b_B$. The consecutive use of drug $P$ is slightly more penalized than the consecutive use of drugs $T$ or $B$.
	\item[] Case 3. $b_P >> b_T > b_B$. The consecutive use of drug $P$ is a sight more penalized than the consecutive use of drug $T$, and this last one is slightly more penalized than the consecutive use of drug $B$.
\end{itemize}

%
%
%
%
The first case is shown in Figure~\ref{fig:cancerCase1}. The hypothetical Case 1, considers that each treatment has the same ``toxicity level'', therefore the optimal schedule (at the beginning) is the application of the most effective treatment, $P$. 
Although each drug is equally penalized, cost~\eqref{eq_CostToxicity} also penalizes the consecutive use of any of them, so after $8$ times applied treatment $P$, there is a switch to drug $B$, which is applied the minimum waiting time, $L_B=2$, finally returning to drug $P$. 
%
Figure~\ref{fig:cancerCase2} shows the result for Case 2. The simulation represents a similar scenario to the first case, nevertheless, the slightly greater weight $b_P$ results in a less number of consecutive times that treatment $P$ is applied to the system than in Case 1.
%
Figure~\ref{fig:cancerCase3} represents simulations for the third case showing a complete different scenario; the population of phenotype cells decays slower than the first cases because the consecutive use of the most effective treatments are highly penalized.
%
\begin{figure}
	\centering
	\includegraphics[width=0.75\textwidth]{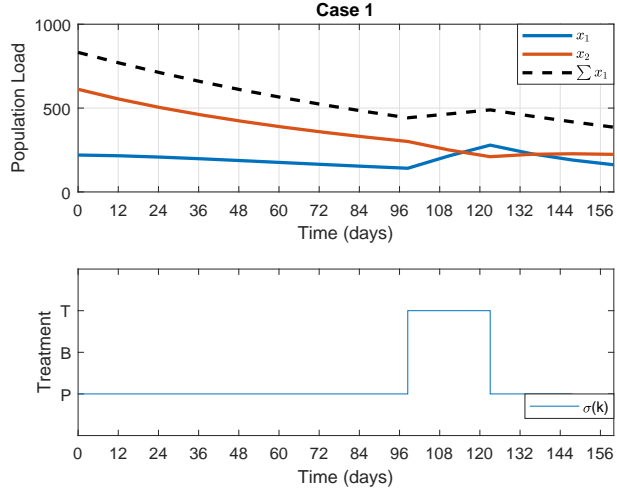}
	\caption{Phenotype cell measurement and proposed treatment for Case 1.}
	\label{fig:cancerCase1}
\end{figure}
\begin{figure}
	\centering
	\includegraphics[width=0.75\textwidth]{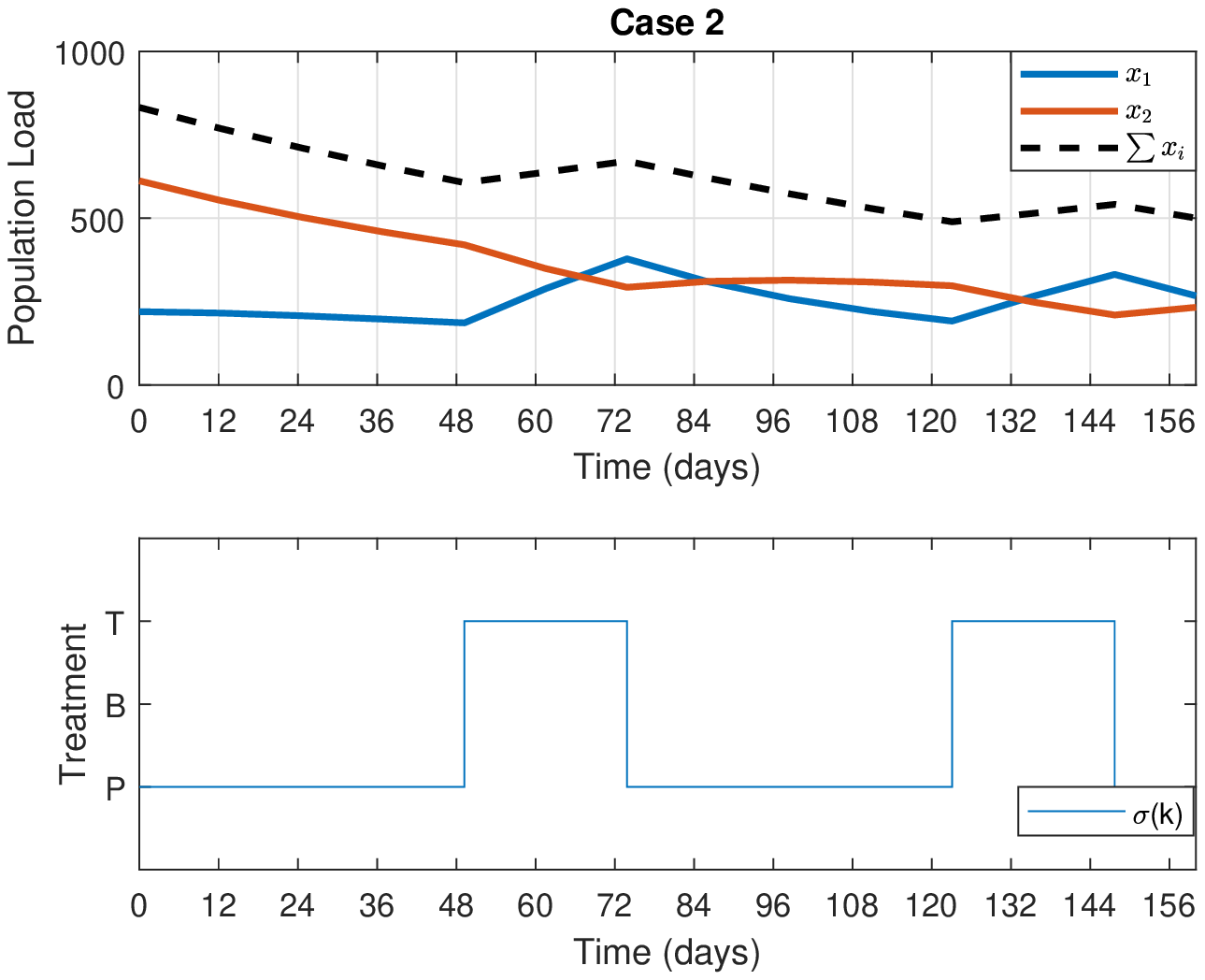}
	\caption{Phenotype cell measurement and proposed treatment for Case 2.}
	\label{fig:cancerCase2}
\end{figure}
\begin{figure}
	\centering
	\includegraphics[width=0.75\textwidth]{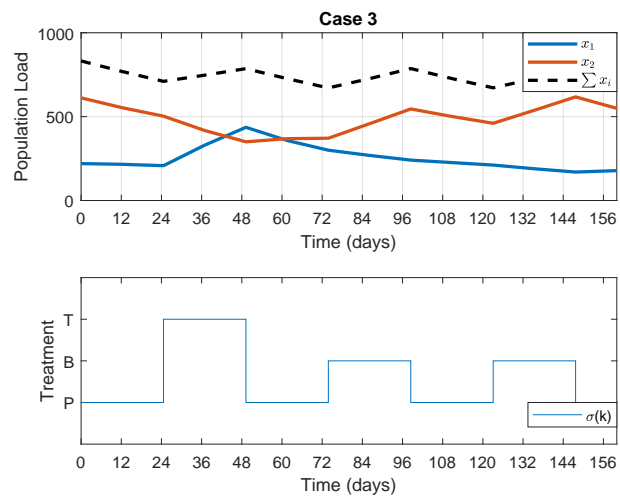}
	\caption{Phenotype cell measurement and proposed treatment for Case 3.}
	\label{fig:cancerCase3}
\end{figure}

\begin{remark}
	Generally, waiting time constraints related to toxicity or to the onset of resistance (regardless the application) are satisfied by restrictions, incorporating computational complexity to the optimization problem, and making them some times a difficult, if not impossible, task. We have shown in the last simulations that the waiting times can be cornered by minimizing a particular function cost instead of adding hard constraints to the problem, which is understood -by the control point of view- as a benefit in the implementation of the optimization problem.
\end{remark}

%
\section{Conclusions}\label{Sec:conclusion}

This paper proposed a proper set-based MPC for discrete-time switched system. The formulation drives the controlled states to a given region of the sate space, which is defined as an invariant set for the switched system. 
The proposal was applied to a simplified viral mutation model, proving that it attenuates the effect of the viral mutation in several scenarios containing the chronic and acute infection cases. The strategy was compared with some basic viral mutation treatments and the optimal solution showing that the adverse effects of virus mutation in acute and chronic infection are minimized. 
%
%
The proposal was applied to schedule an optimal solution for cancer treatment as well. The possibility of satisfying the maximum waiting times between drugs inputs related to toxicity or the onset of resistance for some drugs by minimizing a particular function cost was analyzed. The results showed that the proposal fulfilled different waiting times for each drug by considering different weights that penalized the consecutive use of each treatment.

\bibliographystyle{elsarticle-num} 
\bibliography{bib_yalmip,bib_aanderson}

\begin{thebibliography}{10}
\expandafter\ifx\csname url\endcsname\relax
  \def\url#1{\texttt{#1}}\fi
\expandafter\ifx\csname urlprefix\endcsname\relax\def\urlprefix{URL }\fi
\expandafter\ifx\csname href\endcsname\relax
  \def\href#1#2{#2} \def\path#1{#1}\fi

\bibitem{Liberzon03}
D.~Liberzon, Switching in System and Control, 2003.
\newblock \href {http://dx.doi.org/10.1007/978-1-4612-0017-8}
  {\path{doi:10.1007/978-1-4612-0017-8}}.

\bibitem{ChapmanCDC18}
M.~Chapman, E.~Mazumdar, E.~Langer, R.~Sears, C.~Tomlin, On the analysis of
  cyclic drug schedules for cancer treatment using switched dynamical systems,
  2018, pp. 3503--3509.
\newblock \href {http://dx.doi.org/10.1109/CDC.2018.8619490}
  {\path{doi:10.1109/CDC.2018.8619490}}.

\bibitem{Vargasbook19}
E.~A. Hernandez-Vargas, Modeling and Control of Infectious Diseases in the
  Host: With MATLAB and R, Academic Press - ELSEVIER, 2019.

\bibitem{Clavel04}
F.~Clavel, A.~J~Hance, Hiv drug resistance, The New England journal of medicine
  350 (2004) 1023--35.
\newblock \href {http://dx.doi.org/10.1056/NEJMra025195}
  {\path{doi:10.1056/NEJMra025195}}.

\bibitem{Martinez08}
J.~Martinez-Cajas, M.~Wainberg, Antiretroviral therapy - optimal sequencing of
  therapy to avoid resistance, Drugs 68 (2008) 43--72.
\newblock \href {http://dx.doi.org/10.2165/00003495-200868010-00004}
  {\path{doi:10.2165/00003495-200868010-00004}}.

\bibitem{Molla96}
A.~Molla~et. al., Ordered accumulation of mutations in hiv protease confers
  resistance to ritonavir, Nature Medicine 2.
\newblock \href {http://dx.doi.org/10.1038/nm0796-760}
  {\path{doi:10.1038/nm0796-760}}.

\bibitem{RisomNat18}
T.~Risom, E.~Langer, M.~Chapman, et. al., Differentiation-state plasticity is a
  targetable resistance mechanism in basal-like breast cancer, Nature
  Communications 9.
\newblock \href {http://dx.doi.org/10.1038/s41467-018-05729-w}
  {\path{doi:10.1038/s41467-018-05729-w}}.

\bibitem{Migliardi12}
G.~Migliardi, F.~Sassi, D.~Torti, F.~Galimi, E.~Zanella, M.~Buscarino,
  D.~Ribero, A.~Muratore, P.~Massucco, A.~Pisacane, M.~Risio, L.~Capussotti,
  S.~Marsoni, F.~Di~Nicolantonio, A.~Bardelli, P.~Comoglio, L.~Trusolino,
  A.~Bertotti, Inhibition of mek and pi3k/mtor suppresses tumor growth but does
  not cause tumor regression in patient-derived xenografts of ras-mutant
  colorectal carcinomas, Clinical cancer research : an official journal of the
  American Association for Cancer Research 18 (2012) 2515--25.
\newblock \href {http://dx.doi.org/10.1158/1078-0432.CCR-11-2683}
  {\path{doi:10.1158/1078-0432.CCR-11-2683}}.

\bibitem{Jokinen15}
E.~Jokinen, J.~Koivunen, Mek and pi3k inhibition in solid tumors: Rationale and
  evidence to date, Therapeutic Advances in Medical Oncology 7 (2015) 170--180.
\newblock \href {http://dx.doi.org/10.1177/1758834015571111}
  {\path{doi:10.1177/1758834015571111}}.

\bibitem{Grilley16}
J.~Grilley-Olson, P.~Bedard, A.~Fasolo, et. al., A phase ib dose-escalation
  study of the mek inhibitor trametinib in combination with the pi3k/mtor
  inhibitor gsk2126458 in patients with advanced solid tumors, Investigational
  New Drugs 34.
\newblock \href {http://dx.doi.org/10.1007/s10637-016-0377-0}
  {\path{doi:10.1007/s10637-016-0377-0}}.

\bibitem{bertsekas1995dynamic}
D.~P. Bertsekas, D.~P. Bertsekas, D.~P. Bertsekas, D.~P. Bertsekas, Dynamic
  programming and optimal control, Vol.~1, Athena scientific Belmont, MA, 1995.

\bibitem{rawlings2017model}
J.~B. Rawlings, D.~Q. Mayne, M.~Diehl, Model predictive control: theory,
  computation, and design, Vol.~2, Nob Hill Publishing Madison, WI, 2017.

\bibitem{MayneAUT00}
D.~Q. Mayne, J.~B. Rawlings, C.~V. Rao, P.~O.~M. Scokaert, Constrained model
  predictive control: Stability and optimality, Automatica 36 (2000) 789--814.

\bibitem{RawlingsLIB09}
J.~B. Rawlings, D.~Q. Mayne, Model Predictive Control: Theory and Design, 1st
  Edition, Nob-Hill Publishing, 2009.

\bibitem{AndersonOCAM18}
A.~Anderson, A.~H. Gonz\'alez, A.~Ferramosca, E.~Kofman, Finite-time
  convergence results in robust model predictive control, Optimal Control
  Applications {\&} Methods.

\bibitem{Blanchinibook15}
F.~Blanchini, S.~Miani, Set-Theoretic Methods in Control, Systems \& Control:
  Foundations \& Applications, Springer International Publishing, 2015.

\bibitem{AndersonSCL18}
A.~Anderson, A.~H. Gonz\'alez, A.~Ferramosca, A.~D'Jorge, E.~Kofman, Robust mpc
  suitable for closed-loop re-identification, based on probabilistic invariant
  sets, Systems {\&} Control Letters 118 (2018) 84 --93.

\bibitem{SunCCE11}
Z.~Sun, S.~Ge, Stability Theory of Switched Dynamical Systems, 2011, pp.
  1--253.

\bibitem{GeromelIJC06}
J.~C.~Geromel, P.~Colaneri, Stability and stabilization of discrete time
  switched systems, International Journal of Control - INT J CONTR 79 (2006)
  719--728.
\newblock \href {http://dx.doi.org/10.1080/00207170600645974}
  {\path{doi:10.1080/00207170600645974}}.

\bibitem{FiacchiniAUT14}
M.~Fiacchini, M.~Jungers, Necessary and sufficient condition for
  stabilizability of discrete-time linear switched systems: A set-theory
  approach, Automatica 50 (2014) 75--83.
\newblock \href {http://dx.doi.org/10.1016/j.automatica.2013.09.038}
  {\path{doi:10.1016/j.automatica.2013.09.038}}.

\bibitem{Zhendong05}
Z.~Sun, S.~Sam~Ge, Switched Linear Systems: Control and Design, 2005.
\newblock \href {http://dx.doi.org/10.1007/1-84628-131-8}
  {\path{doi:10.1007/1-84628-131-8}}.

\bibitem{Locatelli01}
A.~Locatelli, Optimal Control: An Introduction, Vol.~55, 2001.
\newblock \href {http://dx.doi.org/10.1115/1.1470675}
  {\path{doi:10.1115/1.1470675}}.

\bibitem{hernandez2011discrete}
E.~Hernandez-Vargas, P.~Colaneri, R.~Middleton, F.~Blanchini, Discrete-time
  control for switched positive systems with application to mitigating viral
  escape, International journal of robust and nonlinear control 21~(10) (2011)
  1093--1111.

\bibitem{Lofberg2004}
J.~L{\"{o}}fberg, {YALMIP : A Toolbox for Modeling and Optimization in MATLAB},
  in: In Proceedings of the CACSD Conference, Taipei, Taiwan, 2004.

\bibitem{Matlab}
{The MathWorks Inc.}, \href{https://www.mathworks.com}{{Matlab R2017b}}.
\newline\urlprefix\url{https://www.mathworks.com}

\bibitem{gurobi}
{Gurobi Optimization LLC}, \href{http://www.gurobi.com}{Gurobi optimizer
  reference manual} (2019).
\newline\urlprefix\url{http://www.gurobi.com}

\bibitem{Branch-and-bound1960}
A.~H. Land, A.~G. Doig, An automatic method of solving discrete programming
  problems, Econometrica 28~(3) (1960) 497--520.

\bibitem{AIDSInfo13}
AIDSInfo, Panel of antiretroviral guidelines for adults and adolescents,
  Department of Health and Human Services, Washington Guidelines for the use of
  Antiretroviral Agents in HIV-1 Infected Adults and Adolescents.

\bibitem{Kumar19}
N.~Kumar, G.~Cramer, S.~A. Zamani~Dahaj, B.~Sundaram, J.~Celli, R.~Kulkarni,
  Stochastic modeling of phenotypic switching and chemoresistance in cancer
  cell populations (01 2019).

\bibitem{ChapmanCDC16}
M.~Chapman, T.~Risom, A.~Aswani, R.~Dobbe, R.~Sears, C.~Tomlin, A model of
  phenotypic state dynamics initiates a promising approach to control
  heterogeneous malignant cell populations, 2016, pp. 2481--2487.
\newblock \href {http://dx.doi.org/10.1109/CDC.2016.7798634}
  {\path{doi:10.1109/CDC.2016.7798634}}.

\bibitem{GoldmanNat15}
A.~Goldman, B.~Majumder, A.~Dhawan, S.~Ravi, D.~Goldman, M.~Kohandel,
  P.~Majumder, S.~Sengupta, Temporally sequenced anticancer drugs overcome
  adaptive resistance by targeting a vulnerable chemotherapy-induced phenotypic
  transition, Nature communications 6 (2015) 6139.
\newblock \href {http://dx.doi.org/10.1038/ncomms7139}
  {\path{doi:10.1038/ncomms7139}}.

\end{thebibliography}

\end{document}